\theoremstyle{plain}
\newtheorem{theorem}{Theorem}[section]
\newtheorem{lemma}[theorem]{Lemma}
\newtheorem{proposition}[theorem]{Proposition}
\newtheorem{corollary}[theorem]{Corollary}
\numberwithin{equation}{section}
\theoremstyle{definition}
\newtheorem{definition}[theorem]{Definition}
\newtheorem{example}[theorem]{Example}
\newtheorem{remark}[theorem]{Remark}
\DeclareMathOperator{\Mod}{-Mod}
\DeclareMathOperator{\module}{-mod}
\DeclareMathOperator{\hd}{hd}
\DeclareMathOperator{\gd}{gd}
\DeclareMathOperator{\Ob}{Ob}
\DeclareMathOperator{\pd}{pd}
\DeclareMathOperator{\gl.dim}{gl.dim}
\DeclareMathOperator{\Char}{char}
\newcommand{\FI}{{\mathscr{FI}}}
\title[Global dimensions of category algebras of finite categories]{Classifying global dimensions of category algebras of some finite combinational categories}
\author{Liping Li}
\address{HPCSIP (Ministry of Education), College of Mathematics and Statistics, Hunan Normal University; Changsha, Hunan 410081, China.}
\email{lipingli@hunnu.edu.cn}
\author{ting zhang}
\address{College of Mathematics and Statistics, Hunan Normal University; Changsha, Hunan 410081, China.}
\email{201710100078@hunnu.edu.cn}
\thanks{The first author is supported by the National Natural Science Foundation of China 11771135 and the Start-Up Funds of Hunan Normal University 830122-0037.}
\begin{document}

\begin{abstract}
In this paper we classify global dimensions of the category algebras of some finite categories, including finite truncations of the categories $FI$, $FI_G$, $FI_d$, $OI$, $OI_G$, $OI_d$ and $VI$.
\end{abstract}

\maketitle
\section{Introduction}

In this paper we study representation theory of some combinatorial categories, an active area attracting the attention of a lot people because of its numerous applications in other areas such as representation stability theory; see for instances \cite{CEF, CEFN, D, GL, GL1, LR, LY, N, PS, SS, St, St1, St2}. Specifically, since the category algebras (over a field) of many infinite combinatorial categories have infinite global dimension; see for instances \cite[Thorem 4.9]{LY} for $FI$, the category of finite sets and injections, \cite[Corollary 1.6]{GL1} and \cite[Theorem 1.9]{N} for $VI$, the category of finite dimensional vector spaces over finite fields and linear injections, and \cite[Corollary 5.18]{SS2} for $FI_d$, the category of finite sets and pairs of injections and $d$-coloring maps, we are motivated to establish the finite version of this result. That is, we study global dimensions of category algebras (over a field) of the following finite combinatorial categories, which (up to category equivalence) are truncations of those infinite combinatorial categories appearing in representation stability theory:

\begin{center}
Table 1: A few finite combinatorial categories
\begin{tabular}{c|cc}
  Categories & Objects & Morphisms\\
  \hline
  $FI_n$ & $S \subseteq [n]$ & injections\\
  $FI_{G, n}$ & $S \subseteq [n]$ & $(f, g): S \times S \to T \times G$ where $f$ is injective\\
  $FI_{d,n}$ & $S \subseteq [n]$ & pairs of injections and $d$-coloring maps\\
  $OI_n$ & $S \subseteq [n]$ & order-preserving injections\\
  $OI_{G,n}$ & $S \subseteq [n]$ & $(f, g): S \times S \to T \times G$ where $f$ is injective and order-preserving\\
  $OI_{d,n}$ & $S \subseteq [n]$ & pairs of order-preserving injections and $d$-coloring maps\\
  $VI_n$ & $F_q^{\oplus S}, \, S \subseteq [n]$ & linear injections.
\end{tabular}
\end{center}

Our main result is:

\begin{theorem} \label{main theorem}
Let $n$ be a fixed natural number (including 0), and let $k$ be a field. Then one has:
\begin{center}
\begin{tabular}{c|c}
  Category algebras & Global dimensions\\
  \hline
  $kFI_n$ & $\begin{cases} \infty, & \text{if } \Char k \mid n!; \\ n, & \text{else}.\end{cases}$\\
  $kFI_{G, n}$ & $\begin{cases} \infty, & \text{if } \Char k \mid n!|G|^n; \\ n, & \text{else}.\end{cases}$\\
  $kFI_{d,n}$ & $\begin{cases} \infty, & \text{if } \Char k \mid n!; \\ n, & \text{else}.\end{cases}$ \\
  $kOI_n$ & $n$. \\
  $kOI_{G, n}$ & $\begin{cases} \infty, & \text{if } \Char k \mid |G|; \\ n, & \text{else}.\end{cases}$\\
  $kOI_{d,n}$ & $n$. \\
  $kVI_n$ & $\begin{cases} \infty, & \text{if } \Char k \mid (q^n-1)(q^n-q) \ldots (q^n-q^{n-1}); \\ n, & \text{else}.\end{cases}$
\end{tabular}
\end{center}
\end{theorem}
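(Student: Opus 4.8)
The plan is to treat all seven families uniformly by exploiting the \EI-category structure: in each case the category $\C$ (one of $FI_n$, $FI_{G,n}$, etc.) has finitely many objects, every endomorphism is an automorphism, and the automorphism group of the object indexed by $S\subseteq[n]$ is $\Aut_\C(S)$ (which is $\Sigma_{|S|}$, $\Sigma_{|S|}\wr G$, $1$, $G^{|S|}$, etc., depending on the family). The key structural input is that the objects are partially ordered by inclusion of subsets (equivalently by cardinality, after choosing skeleta), so $k\C$ is a \emph{directed} \EI-category algebra; for such algebras there is a standard recollement / filtration argument reducing homological questions to the ``layers.'' Concretely, I would first recall (or cite from the representation-stability literature, e.g.\ \cite{GL, LY}) that the simple $k\C$-modules are indexed by pairs $(S, V)$ with $V$ a simple $k\Aut_\C(S)$-module, and that $k\C$ has finite global dimension if and only if each group algebra $k\Aut_\C(S)$ is semisimple, i.e.\ $\Char k \nmid |\Aut_\C(S)|$ for all $S$ — this immediately yields the dichotomy in the table, since $\max_S|\Aut_\C(S)|$ divides $n!$, $n!|G|^n$, $n!$, $1$, $|G|$ (wait: for $OI_{G,n}$ the largest automorphism group is $G^n$, so the condition is $\Char k\mid |G|$), $1$, and $\prod_{i=0}^{n-1}(q^n-q^i)=|GL_n(\F_q)|$ respectively. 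So the ``$\infty$'' cases are essentially bookkeeping once the general criterion is in place.

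The substance is the computation of the exact value $n$ in the semisimple-automorphism-group case. Here I would build, for each simple module $S_{(S,V)}$, an explicit projective resolution of length exactly $|S|$, so that the top objects (with $|S|=n$) realize projective dimension $n$ and nothing exceeds it. The natural tool is the combinatorial analogue of the Koszul-type resolutions used for $FI$-modules: for $FI_n$ one has the standard complex whose $i$-th term at object $T$ is built from the $\binom{|T|}{i}$-many ways of deleting $i$ elements, twisted by the sign or by induced representations; this is the ``derived functor of the genus/degree'' picture from \cite{CEF, CEFN} restricted to the truncation. For the $OI$ families the combinatorics is even cleaner (no symmetric group, so the resolution is literally a simplicial/cube complex), and for $VI_n$ one replaces subsets by subspaces and uses the subspace lattice of $\F_q^n$. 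I would prove by induction on $n$, peeling off the top object via the idempotent $e$ corresponding to $[n]$: the algebra $k\C$ sits in a short exact sequence relating it to $e(k\C)e\cong k\Aut_\C([n])$ and to $k\C'$ for the truncation $\C'$ at level $n-1$, and $\gl.\dim k\C \le \gl.\dim k\C' + (\text{length of the resolution of the new simples}) $, with a matching lower bound from a nonvanishing $\Ext^n$.

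For the lower bound I would exhibit a specific pair of simples $S_{(\emptyset, k)}$ and $S_{([n], W)}$ (with $W$ the trivial or sign module) and show $\Ext^n_{k\C}(S_{([n],W)}, S_{(\emptyset,k)})\neq 0$ by computing it through the explicit resolution, reducing to a statement that a certain alternating sum of induced/restricted representations is nonzero — for the order-preserving families this reduces to the reduced homology of a simplex (or a Boolean lattice) being concentrated in the top degree, which is a classical fact, and for $FI_n$/$VI_n$ one tensors that with the relevant group representation and uses semisimplicity to see the relevant $\Hom$ survives. \textbf{The main obstacle} I anticipate is not the $FI$ or $OI$ cases, where the resolutions are by now folklore, but rather (i) handling the ``decorated'' versions $FI_{G,n}$, $FI_{d,n}$, $OI_{G,n}$, $OI_{d,n}$ uniformly — one must check that adjoining $G$-labels or $d$-colorings multiplies the relevant $\Ext$-groups by a nonzero (because semisimple) factor without changing the length — and (ii) the $VI_n$ case, where the subspace lattice replaces the Boolean lattice and one needs the sharp statement that its order complex has homology only in top degree \emph{and} that the resulting $GL_n(\F_q)$-representation (a Steinberg-type module) pairs nontrivially against the trivial module, which genuinely uses $\Char k\nmid |GL_n(\F_q)|$. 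Getting a single inductive framework that covers the group-decorated, coloring-decorated, ordered, and linear cases simultaneously — rather than seven separate arguments — is where the real care is needed.
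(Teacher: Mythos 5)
Your overall division of labor matches the paper's in the first half: the $\infty$ cases and the upper bound $\gl.dim kC_n \leqslant n$ come from the general fact that a finite EI-category algebra has finite global dimension precisely when every automorphism group has order invertible in $k$, and is then bounded by the longest chain in the object poset (the paper cites Xu's thesis for this criterion rather than the representation-stability papers, but that is cosmetic). For the lower bound your route is genuinely different. You propose explicit Koszul/simplicial resolutions of the simple modules plus an induction peeling off the top object via a recollement; the paper never writes down an explicit resolution. Instead it lifts the simple module $M$ concentrated at $\emptyset$ to the infinite ambient category $C$, invokes the homological degree bound $\hd_s(LM) \leqslant s$ from \cite{L} together with the genetic shift functor $\Sigma$ to prove abstractly that the $s$-th term of a minimal projective resolution of $LM$ is nonzero and generated in degree exactly $s$ for every $s \leqslant n$, and then transports this back to $C_n$ via the restriction functor, which it shows preserves projective covers. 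Your approach would, if completed, also recover the paper's Koszulity corollary; but the pieces you yourself flag as the main obstacle --- the decorated categories $FI_{G,n}$, $FI_{d,n}$, $OI_{G,n}$, $OI_{d,n}$ and the Solomon--Tits input needed for $VI_n$ --- are precisely the content that the paper's shift-functor argument handles uniformly and that your plan leaves unexecuted.

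There is also one concrete error to fix. Under the covariant convention used throughout the $FI$-module literature and in this paper, the representable projective $kC_n(S,-)$ is supported on objects of size $\geqslant |S|$; consequently the simple supported on the top object $[n]$ is itself projective once $k\Aut_{C_n}([n])$ is semisimple, and it is the simple at $\emptyset$ that has projective dimension $n$. So the minimal resolution of $S_{(S,V)}$ has length $n-|S|$, not $|S|$, and the nonvanishing extension group witnessing the lower bound is $\mathrm{Ext}^n_{kC_n}(S_{(\emptyset,k)}, S_{([n],W)})$; the group you propose to compute, $\mathrm{Ext}^n_{kC_n}(S_{([n],W)}, S_{(\emptyset,k)})$, vanishes for $n \geqslant 1$ because its first argument is projective. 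The explicit complex you describe (simplicial chains on a simplex, resp.\ the building for $VI$) is in fact the resolution of the bottom simple, so your combinatorics points the right way even though the labels are swapped; once corrected, the $OI_n$ and $FI_n$ cases go through as you sketch, but the $VI_n$ and decorated cases still require the detailed verification you defer.
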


We briefly describe the essential idea to establish the above classification, which is different from the combinatorial representation theory approach described in \cite{St1, St2}, where Stein and Steinberg classified the global dimensions of some important finite monoids (over the complex field). Firstly, since these finite combinatorial categories are equivalent to \emph{truncations} of corresponded infinite combinatorial categories respectively, we may view a representation a finite combinatorial category as a representation of the corresponded infinite combinatorial category via the lift process. Secondly, there are natural restriction functors from categories of representations of infinite combinatorial categories to categories of representations of truncations, and we can show that these restriction functors preserve projective covers and hence minimal projective resolutions. Since there already exists a clear description about minimal projective resolutions for some special representations (including those lifted from representations of truncations) of these infinite combinatorial categories in literature (see for instance \cite{L}), applying the restriction functor, we deduce the above result.

This paper is organized as follows. In Section 2 we describe some preliminary results on representations of categories. The lift functor and restriction functor are introduced in Section 3, where we prove that the restriction functor preserves projective covers and minimal projective resolutions. In the last section we prove the main theorem.

\section{Preliminaries}

\subsection{Representations of categories}

Throughout this paper let $k$ be a field and let $C$ be a small category. We briefly recall some background on representations of categories; for more details, please refer to \cite{M, W, Xu}.

\begin{definition}
A $k$-linear representation of $C$ (or a $C$-module) is a covariant functor $V$ from $C$ to $k\Mod$, the category of all vector spaces. A \emph{locally finite} $k$-linear representation of $C$ is a covariant functor from $C$ to $k \module$, the category of finite dimensional vector spaces.
\end{definition}

Denote the category of $C$-modules by $C\Mod$. By definition, a $C$-module $V$ is locally finite if and only if for every object $x$ in $C$, the value of $V$ on $x$, denoted by $V_x$, is a finite dimensional vector space. In this paper we only consider locally finite representations. Since kernels and cokernels are defined pointwise, the category of locally finite representations is abelian.

We can also study representations of $C$ from the viewpoint of module theory. For this purpose, we need the following definition of category algebras, which was introduced by Webb in \cite{W}.

\begin{definition}
Let $C$ be a small category. The category algebra $kC$ is the free $k$-module whose basis is the set of morphisms in $C$. The multiplication is induced by the following rule on the basis elements of $kC$ by letting
\begin{equation*}
f \ast g = \begin{cases}
 f \circ g, & \text{if f and g can be composed in}\ C;\\
 0, & \text{otherwise} .
\end{cases}
\end{equation*}
\end{definition}

With this multiplication, $kC$ becomes an associative $k$-algebra. Let $kC\Mod$ be the category of $kC$-modules and $kC \module$ be the category of finitely generated $kC$-modules. The following theorem illustrates a relationship between $kC$-modules and $C$-modules; see \cite{M}.

\begin{theorem}
For any small category $C$, there exist functors $\iota : C \Mod \to kC\Mod$ and $\sigma : kC\Mod \to C \Mod$ such that $\iota$ is fully faithful, and $\sigma \circ \iota \cong \mathrm{Id}_{C\Mod}.$ Moreover if $\Ob (C)$, the set of objects in $C$, is finite, then $\iota \circ \sigma \cong \mathrm{Id}_{kC\Mod}$.
\end{theorem}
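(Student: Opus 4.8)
The plan is to construct the two functors explicitly from the idempotent structure of $kC$ and then verify the claimed identities by direct computation. For an object $x \in \Ob(C)$, write $e_x \in kC$ for the basis element corresponding to the identity morphism $\mathrm{id}_x$; the composition rule defining the multiplication shows that these form a family of orthogonal idempotents, $e_x \ast e_y = \delta_{xy} e_x$, and that every basis element $f \colon x \to y$ satisfies $e_y \ast f = f = f \ast e_x$. These relations are the engine behind every step below.

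To define $\iota$, I would send a $C$-module $V$ to the $k$-module $\iota(V) = \bigoplus_{x \in \Ob(C)} V_x$, equipped with the $kC$-action in which a basis element $f \colon x \to y$ sends $v \in V_z$ to $V(f)(v)$ when $z = x$ and to $0$ otherwise, extended $k$-linearly. Because every element of the direct sum is supported on finitely many objects, this is a well-defined action, and functoriality of $V$ makes it associative; on a natural transformation $\phi \colon V \to W$ I set $\iota(\phi) = \bigoplus_x \phi_x$. Conversely, I would define $\sigma$ on a $kC$-module $M$ by $\sigma(M)_x = e_x M$ for each object $x$, and for a morphism $f \colon x \to y$ let $\sigma(M)(f) \colon e_x M \to e_y M$ be left multiplication by $f$; the identities $f \ast e_x = f = e_y \ast f$ guarantee this lands in $e_y M$ and that $\sigma(M)$ is a functor. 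Checking that both assignments respect composition and identities is routine.

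With the constructions in hand I would verify the three assertions. First, $\sigma \circ \iota \cong \mathrm{Id}$: for a $C$-module $V$ the idempotent $e_x$ acts on $\iota(V) = \bigoplus_y V_y$ as the projection onto $V_x$ (since $V(\mathrm{id}_x) = \mathrm{id}_{V_x}$), so $e_x \iota(V) = V_x$ and the structure maps agree on the nose, yielding a natural isomorphism. Second, $\iota$ is fully faithful: any $kC$-module map $\Phi \colon \iota(V) \to \iota(W)$ commutes with each $e_x$, hence carries $V_x = e_x\iota(V)$ into $W_x = e_x\iota(W)$ and so decomposes as $\Phi = \bigoplus_x \phi_x$; the $kC$-linearity of $\Phi$ with respect to the basis elements $f$ is then \emph{exactly} the naturality of the family $(\phi_x)_x$, which shows $\Phi = \iota(\phi)$ for a unique $C$-module morphism $\phi$. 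Faithfulness is immediate since $\iota$ acts as a direct sum on components.

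The last assertion is where the finiteness hypothesis is indispensable, and is the step I expect to require the most care. When $\Ob(C)$ is finite the element $1 = \sum_{x} e_x$ is a genuine element of $kC$ and is a two-sided identity, so for any $kC$-module $M$ one has $M = 1 \cdot M = \sum_x e_x M$, and orthogonality $e_x \ast e_y = \delta_{xy} e_x$ forces this sum to be direct; the resulting addition map $\iota(\sigma(M)) = \bigoplus_x e_x M \xrightarrow{\ \sim\ } M$ is then a natural $kC$-isomorphism. The obstacle, and the reason the statement is phrased conditionally, is that for infinite $\Ob(C)$ the sum $\sum_x e_x$ is not an element of $kC$, so $kC$ is non-unital and a general module $M$ can fail to satisfy $M = \bigoplus_x e_x M$; the clean isomorphism $\iota \circ \sigma \cong \mathrm{Id}$ then breaks down, while $\sigma \circ \iota \cong \mathrm{Id}$ and the full faithfulness of $\iota$ survive.
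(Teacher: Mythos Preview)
Your argument is correct and is precisely the standard construction one finds in the references: build $\iota$ as the direct sum of values with the obvious $kC$-action, build $\sigma$ via the idempotents $e_x$, and use the existence of the unit $\sum_x e_x$ in the finite-object case to obtain the decomposition $M = \bigoplus_x e_x M$. The paper itself does not give a proof of this theorem; it simply records the statement with a reference to Mitchell \cite{M}, so there is nothing further to compare against.
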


Consequently, $C\Mod$ can be identified with a full subcategory of $kC \Mod$, and every $C$-module can be viewed as a $kC$-module.

A small category $C$ is called a \emph{weakly directed category} if there is a preorder $\preccurlyeq$ on $\Ob (C)$ such that for two objects $x$ and $y$, one has $x \preccurlyeq y$ if $C (x, y)$, the set of morphisms from $x$ to $y$, is not empty. For weakly directed categories, following \cite[Definition 4.2.18]{Xu}, one can define \emph{ideals} and \emph{coideals} as follows:

\begin{definition}
Let $D$ be a a full subcategory of $C$. We say that $D$ is an ideal in $C$ if for any $x \in \Ob (D)$ and $y \in \Ob (C)$, whenever $y \preccurlyeq x$ one has $y \in \Ob (D)$ as well. Dually, we define coideals.
\end{definition}

If $C$ is an \emph{EI-category}; that is, every endomorphism in $C$ is an isomorphism, then the reader can check that $C$ is weakly directed. Furthermore, the preorder defined above induces a partial order on the set of isomorphism classes of objects. For details, see \cite{W}.

\subsection{Some combinatorial categories and their representations}

For the convenience of the reader, let us repeat the definitions of infinite combinatorial categories considered in this paper. For more information, one may refer to \cite{GL, SS}.

\begin{example}[The category $FI$]
Objects of the category $FI$ are finite sets of natural numbers,\footnote{In literature, objects of $FI$ are finite sets rather than finite sets of natural numbers. The reason that we impose this extra assumption is that we are working with small categories. That is, the class of objects must be a set. This extra assumption does not matter since two categories are equivalent. By the same reason, our definitions of other categories are also slightly different the ones appearing in literature.} and morphisms of $FI$ are injections between them.
\end{example}

\begin{example}[The category $FI_G$]
Let $G$ be a finite group. Objects of the category $FI_G$ are finite sets of natural numbers. For two finite sets $S$ and $T$, morphisms between them are pairs of map $(f, g): S \times S \to T \times G$, where $f$ is an injection and $g$ is any map. The composition of $(f_1, g_1)\in \FI_G (S, T)$ and $(f_2, g_2)\in FI_G (T,U)$ is defined by
\begin{equation*}
(f_2, g_2) (f_1, g_1) = (f_3, g_3)
\end{equation*}
where
\begin{equation*}
f_3 (x)=f_2(f_1(x)), \quad g_3(x)=g_2(f_1(x))g_1(x), \quad \mbox{ for all } x \in S.
\end{equation*}
When $G = 1$, $FI_G$ coincides with $FI$.
\end{example}

\begin{example}[The category $FI_{d}$]
Let $d$ be a positive integer. Objects of $FI_d$ are finite sets of natural numbers. For two finite sets $S$ and $T$, the morphism set $FI_d (S, T)$ is the set of all pairs $(f,\delta)$ where $f: S \to T$ is an injection, and $\delta: T \setminus f(S) \to [d]$ is an arbitrary map. The composition of $(f_1, \delta_1) \in FI_d(S, T)$ and $(f_2, \delta_2) \in FI_d(T,U)$ is defined by
\begin{equation*}
(f_2, \delta_2) (f_1, \delta_1) = (f_3, \delta_3)
\end{equation*}
where $f_3 = f_2 f_1$ and
\begin{equation*}
\delta_3 (x)= \left\{ \begin{array}{ll}
\delta_1(r) & \mbox{ if } x = f_2(r) \mbox{ for some } r, \\
\delta_2(x) & \mbox{ else. } \end{array} \right.
\end{equation*}
\end{example}

\begin{example}[The categories $OI$, $OI_G$, and $OI_d$]
The category $OI$ is a subcategory of $FI$ sharing the same objects, while morphisms between objects are injections preserving the natural order on the set of natural numbers. The categories $OI_G$ and $OI_d$ are the corresponded subcategory of $FI_G$ and $FI_d$ respectively.
\end{example}

\begin{example}[The categories $VI$]
Let $q$ be a power of a certain prime and $F_q$ be a finite field. Objects of $VI$ are vector spaces over $F_q$ whose bases are finite sets of natural numbers. Morphisms between objects are linear injections.
\end{example}

In the rest of this paper we denote by $C$ any one of the above mentioned categories unless other specified. It is easy to see that $C$ is a weakly directed category with respect to the natural inclusions of finite sets or finite dimensional vector spaces. It is also an EI-category since the endomorphisms of objects form groups (symmetric groups for $FI$ and $FI_d$, wreath products of symmetric groups and $G$ for $FI_G$, trivial group for $OI$ and $OI_d$, the direct product of $n$ copies of $G$ for $OI_G$ (via identifying a map from $[n]$ to $G$ with an element in $G^n$), and general linear groups for $VI$).

It has been proved that $C$ is locally Noetherian over any commutative Noetherian ring in \cite{SS}; that is, the category of finitely generated $C$-modules is abelian. Therefore, from now on we only consider finitely generated $C$-modules. We also point out that the category of $kC$-modules has enough projectives. In particular, for every object $x$ in $C$, the $kC$-module $kC e_x$ is a projective module, where $e_x$ is the identity morphism on $x$. Since $kC e_x$, when viewed as a representation of $C$, is the representable functor $kC(x, -)$, one deduces that finitely generated $kC$-modules and finitely generated $C$-modules coincide.

An important combinatorial property of $C$ is that it has a self-embedding functor $\iota: C \to C$, which we describe briefly here. The reader can refer to \cite{GL} for more details. Let $\ast$ be an element which is not a number. Then for every finite set $S$ of natural numbers, the natural inclusion $S \to S \cup \{ \ast \}$ or $F^S \to F^{S \cup \{ \ast \}}$ induces a fully faithful functor $\iota'$ from $C$ to another category $D$, which has objects subsets of $N \cup \{ \ast \}$ (or vector spaces over $\mathbb{F}_q$ whose bases are subsets of $N \cup \{ \ast \}$ for $VI$) and morphisms injections (or linear injections for $VI$). Clearly, there is a natural isomorphism $\varphi: C \to D$. We then define $\iota = \varphi \circ \iota'$.

Given a finitely generated $C$-module $V$, the assignment $V \to V \circ \iota$ induces an endo-functor $\Sigma: C \module \to C \module$, which is called a \emph{shift functor}. Furthermore, in \cite{GL} it has been shown that $\Sigma$ is a \emph{genetic shift functor}. That is, we have the following result, whose proof can be found in \cite[Section 5]{GL}.

\begin{lemma} \label{genetic shift functor}
For every $x \in \Ob (C)$, one has $\Sigma (kC e_x) \cong P \oplus Q$, where $P$ and $Q$ are finitely generated projective $kC$-modules, $P$ is generated by its values on objects isomorphic to $x$, and $Q$ is generated by its value on some objects $y$ satisfying $|x| = |y| + 1$, where $|x|$ means the size of the finite set $x$ for all categories except $VI$, and the dimension of $x$ for $VI$.
\end{lemma}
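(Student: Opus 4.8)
The plan is to unwind the definition of $\Sigma$ and reduce the assertion to an explicit decomposition of morphism sets. Since the $kC$-module $kCe_x$, regarded as a representation of $C$, is the representable functor $kC(x,-)$, precomposition with the self-embedding $\iota$ identifies $\Sigma(kCe_x)$ with the functor $y \mapsto kC(x, \iota y)$. So it suffices to produce, for each $y \in \Ob(C)$ and naturally in $y$, a splitting of the set $C(x, \iota y)$ into two pieces, each of which is a finite disjoint union of representable functors $C(z, -)$; linearizing then gives $\Sigma(kCe_x) \cong P \oplus Q$ with $P$ and $Q$ finite direct sums of the projective modules $kCe_z$, and the whole problem becomes tracking which objects $z$ occur in each piece.

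I would handle $\FI$, $\FI_G$, $\FI_d$ and their order-preserving analogues $\OI$, $\OI_G$, $\OI_d$ first, where $\iota$ adjoins a single new element $\ast$ to every finite set. A morphism $\phi \colon x \to \iota y$ either misses $\ast$, and so factors uniquely through $y \hookrightarrow \iota y$, or it hits $\ast$ through a uniquely determined element $a \in x$, in which case restricting $\phi$ to $x \setminus \{a\}$ yields a morphism $x \setminus \{a\} \to y$. Since postcomposition with any morphism of $C$ sends $\ast$ to $\ast$, this partition is natural in $y$: the ``missing $\ast$'' part is naturally a finite disjoint union of copies of $C(x,-)$ (one copy for $\FI$, $\FI_G$, $\OI$, $\OI_G$; and $d$ copies for $\FI_d$ and $\OI_d$, the extra copies recording the free color of $\ast$), while the ``hitting $\ast$'' part is naturally a finite disjoint union of copies of $C(x\setminus\{a\},-)$ over the admissible $a$ (all $a \in x$ in the $\FI$-type cases, only $a = \max x$ in the $\OI$-type cases, with an additional $|G|$-fold multiplicity recording the lost group coordinate $g(a)$ for $\FI_G$ and $\OI_G$). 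As $|x \setminus \{a\}| = |x| - 1$, taking $P$ and $Q$ to be the linearizations of the two parts settles all of these cases.

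The remaining and genuinely harder case is $\VI$, where $\iota$ adjoins a new basis vector, so $\iota y = y \oplus L$ with $L$ a line. A linear injection $\phi \colon x \to y \oplus L$ either has image inside $y$ — the analogue of ``missing $\ast$'', contributing a summand $\cong C(x,-)$ — or it does not. For the rest I would compose $\phi$ with the projection $\pi \colon y \oplus L \to y$ along $L$; injectivity of $\phi$ forces $\Ker(\pi\phi) = \phi^{-1}(L)$ to be $0$ or a line $\ell \subseteq x$, and when it is a line, $\phi$ determines an injection $x/\ell \to y$ together with bookkeeping data for how $\ell$ and a chosen complement of $\ell$ interact with $L$. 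Assembling this should exhibit the ``image not inside $y$'' piece as a finite disjoint union of copies of $C(z,-)$ with $\dim z = |x|-1$, indexed essentially by the lines of $x$. I expect this to be the main obstacle: in contrast with the $\FI$ and $\OI$ cases, the stratification by $\dim \Ker(\pi\phi)$ is a priori only a filtration by subfunctors, so one must construct a natural splitting of it, and this requires choices of complements compatible with the actions of the automorphism groups $\Aut(y)$ — the linear-algebraic core of the matter, carried out in \cite[Section~5]{GL}. Granting that splitting, one again lets $P$ and $Q$ be the linearizations of the two pieces. In every case $P$ and $Q$ are finite direct sums of representable, hence finitely generated projective, $kC$-modules, $P$ is generated by its values on objects isomorphic to $x$, and $Q$ is generated by its values on objects $y$ with $|y| = |x|-1$, which is the statement.
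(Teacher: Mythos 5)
The paper does not actually prove this lemma; it is quoted from \cite[Section 5]{GL}. Your plan — identify $\Sigma(kCe_x)$ with the linearization of $y \mapsto C(x,\iota y)$ and exhibit a natural partition of that morphism set into finite disjoint unions of representables — is exactly the right (and the standard) argument, and your treatment of $FI$, $FI_G$, $FI_d$, $OI$, $OI_G$, $OI_d$ is correct, including the bookkeeping of the free color of $\ast$ ($d$ copies) and the lost group coordinate $g(a)$ ($|G|$-fold multiplicity), and the observation that only $a=\max x$ can hit $\ast$ in the order-preserving cases.

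Your $VI$ analysis, however, misassigns the strata. Writing $\phi=(\pi\phi,\lambda)$ with $\lambda\in\Hom(x,L)$ the $L$-component, injectivity of $\phi$ is equivalent to: either $\Ker(\pi\phi)=0$ (any $\lambda$), or $\Ker(\pi\phi)=\ell$ is a line and $\lambda|_{\ell}\neq 0$. The stratum $\Ker(\pi\phi)=0$ is $\Hom(x,L)\times C(x,-)\cong C(x,-)^{\sqcup q^{|x|}}$, i.e.\ it contributes $q^{|x|}$ copies of $kCe_x$ \emph{in degree $|x|$}, of which only one (the $\lambda=0$ piece) is your ``image inside $y$'' summand. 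So the ``image not inside $y$'' piece is \emph{not} generated in degree $|x|-1$: it contains $q^{|x|}-1$ extra copies of $kCe_x$. (A dimension count already shows this: for $\dim x=1$ the complement of $C(x,y)$ in $C(x,\iota y)$ has $q^{n}(q-1)$ elements, which cannot be a union of point functors alone.) The correct split is $P=$ the whole $\Ker(\pi\phi)=0$ stratum and $Q=\bigsqcup_{\ell}\{\lambda:\lambda|_{\ell}\neq 0\}\times C(x/\ell,-)$, giving $q^{|x|-1}(q-1)$ copies of $kCe_{x/\ell}$ for each line $\ell\subseteq x$. Finally, the difficulty you anticipate — that the stratification by $\dim\Ker(\pi\phi)$ is only a filtration needing a splitting — does not arise: each stratum (for each fixed $\ell$, and for $\ell=0$) is a genuine subset of the basis $C(x,\iota y)$ preserved by postcomposition with $\iota f=f\oplus\mathrm{id}_L$ (since $\Ker(f\pi\phi)=\Ker(\pi\phi)$ for $f$ injective and $\lambda$ is unchanged), so the partition linearizes directly to a direct sum decomposition and no choice of complements is needed. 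With that correction your argument is complete and is essentially the proof in \cite[Section 5]{GL}.
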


\subsection{Homology theory of $C$-modules}

Now let us define a uniform homology theory for $C$-modules. Let $V$ be a finitely generated $C$-module, and let $x$ be an object. Then $V_x$ is a finite dimensional vector space over $k$. For every object $y \prec x$ which means that $y \preccurlyeq$ but $y$ is not isomorphic to $x$, there are morphisms $\alpha: y \to x$, which give rise to linear maps $V_{\alpha}: V_y \to V_x$. The \emph{zeroth homology group} of $V$ is defined to be
\begin{equation*}
H_0^{C}(V) = \bigoplus _{x \in \Ob (C)} V_x / \Big{(} \sum_{\begin{matrix} y \prec x \\ \alpha: y \to x \end{matrix}} V_{\alpha} (V_y) \Big{)},
\end{equation*}
which is a finitely generated $C$-module as well. The \emph{generating degree} of $V$ is defined to be
\begin{equation*}
\gd(V) = \sup \{|x| \mid (H_0^{C}(V))_x \neq 0 \}.
\end{equation*}
When the above set is empty (this happens if and only if $V = 0$), we set $\gd(V) = -1$.

Taking zeroth homology groups of $C$-modules gives a functor $H_0^{C}: C \module \to C \module$. It is a left exact functor, and one can define its right derived functors $H_s^{C}: C \module \to C \module$ for $s \in N$. Correspondingly, we define the $s$-th \emph{homology group} $H_s^{C}(V)$ and the $s$-th \emph{homological degree}
\begin{equation*}
\hd_s(V) = \sup \{|x| \mid (H_s^{C}(V))_x \neq 0 \}.
\end{equation*}
Again, if the above set is empty, we let $\hd_s(V) = -1$.

For every natural number $n$, the category $C$ has a full subcategory $Cn$, whose objects are $x \in \Ob (C)$ such that $|x| \leqslant n$. Note that the category algebra $kCn$ is not finite dimensional, but is Morita equivalent to a finite dimensional algebra since $C_n$ is equivalent to a finite category (for instances, the ones described in Table 1.1). Therefore, the category of finitely generated $Cn$-modules is also abelian. We call $Cn$ a \emph{finite truncation} of $C$. Since $Cn$ is also a weakly directed category with respect to the preorder inherited from that on $\Ob (C)$, similarly, one can define a homology theory for $kCn$-modules.

\section{Lift and Restriction functors}

In this section we introduce two important natural functors between $C \module$ and $Cn \module$: the \emph{lift} functor and the \emph{restriction} functor, and prove certain important properties.

For every $V \in Cn \module$, define a $C$-module
\begin{equation*}
\tilde{V} = \bigoplus _{x \in \Ob (C)} \tilde{V}_x
\end{equation*}
by letting
\begin{equation*}
\tilde{V}_x = \begin{cases}
V_x, & \text{if } |x| \leqslant n;\\
0, & \text{otherwise.}
\end{cases}
\end{equation*}
Clearly, the assignment
\begin{equation*}
L: Cn \module \to C \module
\end{equation*}
sending $V$ to $\tilde{V}$ is a functor, called the \emph{lift functor}. Furthermore, the functor $L$ is exact.

By this lift process, any $Cn$-module can be viewed as a $C$-module.

Since $Cn$ is a full subcategory of $C$, there is a natural restriction functor $R: C \module \to Cn \module$. Explicitly, given a $C$-module $W$, one can define a $Cn$-module
\begin{equation*}
RW = \bigoplus _{x \in \Ob (Cn)} W_x.
\end{equation*}
The restriction functor $R$ is also exact. Furthermore, applying \cite[Lemma 4.2.19]{Xu}, one has:

\begin{lemma}
The restriction functor $R$ preserves projective modules.
\end{lemma}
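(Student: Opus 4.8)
The plan is to reduce the claim to the cited general fact \cite[Lemma 4.2.19]{Xu}, which concerns restriction along the inclusion of an ideal in a weakly directed category. First I would verify that $Cn$ is an ideal of $C$ in the sense of the definition given above: if $x \in \Ob(Cn)$, i.e. $|x| \leqslant n$, and $y \preccurlyeq x$ in $C$, then since the preorder on $\Ob(C)$ is given by the existence of morphisms and these are (linear) injections, one has $|y| \leqslant |x| \leqslant n$, so $y \in \Ob(Cn)$. Hence $Cn$ is a full subcategory closed downward under $\preccurlyeq$, which is exactly the ideal condition. This is the only structural input needed, and it is immediate from the description of the preorder as inclusion of finite sets (resp.\ finite-dimensional vector spaces).

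Next I would recall the elementary reason restriction to an ideal is exact and right adjoint (or left adjoint, depending on the convention) to a suitable extension-by-zero or coextension functor on module categories; concretely, for an ideal $D \subseteq C$ the restriction $\Hom_{kC}(kC e_D, -)$-type construction identifies $kCn$-modules with the quotient $kC/I$-modules where $I$ is the ideal of $kC$ spanned by morphisms whose target has size $> n$, and restriction is simply $W \mapsto (kC/I) \otimes_{kC} W$ composed with the identification, or equivalently the functor killing the components $W_x$ with $|x| > n$. The key point is that $e_{Cn} := \sum_{|x| \leqslant n} e_x$ (interpreted suitably in the idempotent-completed or Morita sense) is such that $e_{Cn} \cdot kC \cdot e_{Cn} \cong kCn$ and, because $Cn$ is an ideal, $e_{Cn}$ absorbs composition on the relevant side: there are no morphisms from an object of size $> n$ to an object of size $\leqslant n$. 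Therefore $kC e_x$ for $|x| \leqslant n$, when restricted, loses nothing below degree $n$ and the components above degree $n$ are simply deleted; one checks $R(kC e_x) = kCn(x,-) = kCn\, e_x$, which is projective in $Cn\module$.

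Thus the main steps, in order, are: (1) check $Cn$ is an ideal in $C$ (trivial from the preorder); (2) invoke \cite[Lemma 4.2.19]{Xu} to conclude $R$ carries projectives to projectives, or, if one prefers a self-contained argument, (2$'$) observe $R$ has an exact left adjoint $L$ (the lift functor, already shown exact), so $R$ preserves injectives by the usual adjunction argument — but since we want projectives, instead use that $R$ itself is exact and, on the generators, $R(kCe_x) \cong kCn\,e_x$ for $|x|\leqslant n$ while $R(kCe_x) = 0$ — wait, this last needs care: for $|x| > n$ every component $(kCe_x)_y$ with $|y| \leqslant n$ vanishes because there are no morphisms $x \to y$, so indeed $R(kCe_x) = 0$, which is projective. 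Since every finitely generated projective $kC$-module is a summand of a finite sum of the $kCe_x$ and $R$ is additive, $R$ preserves projectives. The only genuine obstacle is bookkeeping around the fact that $kCn$ is not finite-dimensional and objects are not skeletal, so one should phrase everything in terms of the equivalence of $Cn$ with the finite category in Table 1 and use that the representable projectives generate; none of this is deep, but it is where a careless argument could slip.
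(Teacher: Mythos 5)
Your proposal is correct and follows essentially the same route as the paper, whose entire proof is the observation that $Cn$ is an ideal of $C$ together with the citation of \cite[Lemma 4.2.19]{Xu}; your verification of the ideal condition and your direct check that $R(kCe_x) \cong kCn\,e_x$ for $|x|\leqslant n$ and $R(kCe_x)=0$ for $|x|>n$ are both sound. The extra self-contained argument via representable generators is a valid (and slightly more explicit) elaboration of what the cited lemma encapsulates, so there is nothing to fix.
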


\begin{proof}
The conclusion follows from the observation that $Cn$ is an ideal of $C$.
\end{proof}

An immediate consequence of the above lemma is:

\begin{corollary}
Let $W$ be an $C$-module, and let $P^{\bullet} \to W \to 0$ be a projective resolution of $W$. Then $RP^{\bullet} \to RW \to 0$ is a projective resolution of the $Cn$-module $RW$.
\end{corollary}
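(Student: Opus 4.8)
The plan is to deduce this directly from the two properties of $R$ recorded above: its exactness, and the preceding lemma stating that it preserves projectivity. By definition, a projective resolution $P^{\bullet} \to W \to 0$ is an exact complex
\[
\cdots \to P^2 \to P^1 \to P^0 \to W \to 0
\]
in which every $P^i$ is a projective $C$-module. Applying $R$ termwise, I would first argue that $RP^{\bullet} \to RW \to 0$ is still exact: since $R$ is exact it preserves kernels and cokernels, hence carries exact sequences to exact sequences, and this can be checked one spot at a time. Concretely, $R$ simply discards the values of a $C$-module on objects of size exceeding $n$, and kernels and cokernels in both $C\module$ and $Cn\module$ are computed objectwise, so exactness of $RP^{\bullet}$ at the $i$-th spot follows from exactness of $P^{\bullet}$ at that spot evaluated on each object $x$ with $|x| \leqslant n$.

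It then remains to observe that each term $RP^i$ is projective as a $Cn$-module, which is exactly the content of the preceding lemma applied to $P^i$. Combining the two observations, $RP^{\bullet} \to RW \to 0$ is an exact complex of projective $Cn$-modules augmented over $RW$, that is, a projective resolution of the $Cn$-module $RW$, as claimed.

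I do not anticipate any real obstacle here; the statement is a formal consequence of exactness plus preservation of projectives. The only point deserving a sentence of care is the verification that $R$ genuinely is exact, but this is immediate from the objectwise description of kernels and cokernels and was already noted when $R$ was introduced, so the argument is essentially a short assembly of facts already in hand.
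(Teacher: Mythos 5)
Your argument is correct and is exactly the one the paper intends: the corollary is stated as an immediate consequence of the exactness of $R$ (noted when $R$ is introduced) together with the preceding lemma that $R$ preserves projectives. Nothing further is needed.
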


Now we want to describe the behavior of homology functors under lift and restriction.

\begin{lemma} \label{key lemma}
Let $V$ be a finitely generated $C$-module, and $W$ be a finitely generated $Cn$-module. Then one has:
\begin{enumerate}
\item $R(H_0^{C} (V)) = H_0^{Cn} (RV)$;

\item $R(H_0^{C} (LW)) = H_0^{Cn} (W)$.
\end{enumerate}
\end{lemma}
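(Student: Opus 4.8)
The plan is to unwind the definition of the zeroth homology group in both cases and observe that, because $Cn$ is a full subcategory of $C$ consisting of exactly the objects of size at most $n$, no information relevant to objects in $\Ob(Cn)$ is lost by restricting. Recall that for a $C$-module $U$ and an object $x$,
\begin{equation*}
(H_0^{C}(U))_x = U_x \Big/ \sum_{\begin{matrix} y \prec x \\ \alpha \colon y \to x \end{matrix}} U_{\alpha}(U_y),
\end{equation*}
and the analogous formula holds for $Cn$-modules, where now $y$ ranges only over objects of $Cn$. The key point is that if $x \in \Ob(Cn)$, then every $y \prec x$ also lies in $\Ob(Cn)$: indeed $y \preccurlyeq x$ forces $|y| \leqslant |x| \leqslant n$ (this is precisely the statement that $Cn$ is an ideal of $C$, already used above). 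Hence for $x \in \Ob(Cn)$ the two quotients are literally the same subspace quotient, computed from the same data.

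For part (1), I would argue as follows. Fix $x \in \Ob(Cn)$. Since $RV$ has the same value $V_x$ at $x$ and the same structure maps $V_{\alpha}$ for all $\alpha \colon y \to x$ with $y \in \Ob(Cn)$, and since by the observation above the indexing set $\{(y,\alpha) : y \prec x,\ \alpha \colon y \to x\}$ is identical whether computed in $C$ or in $Cn$, we get $(H_0^{Cn}(RV))_x = (H_0^{C}(V))_x = (R(H_0^{C}(V)))_x$. Running over all $x \in \Ob(Cn)$ and checking that the $Cn$-module structure maps agree (they are induced in both cases from the maps $V_{\beta}$ for $\beta$ a morphism of $Cn$, passed to the quotient) gives the isomorphism of $Cn$-modules. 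Part (2) then follows by the same computation applied to $LW$: for $x \in \Ob(Cn)$ one has $(LW)_x = W_x$ and the structure maps of $LW$ restricted to $Cn$ are exactly those of $W$, so $(H_0^{C}(LW))_x = (H_0^{Cn}(W))_x$, and restricting to $Cn$ changes nothing since $x$ already ranges over $\Ob(Cn)$; that is, $R(H_0^{C}(LW)) = H_0^{Cn}(W)$.

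There is no real obstacle here; the only thing that requires a moment's care is verifying that the natural maps being compared are genuinely the canonical ones — i.e. that both $R \circ H_0^{C}$ and $H_0^{Cn} \circ R$ are computed via the canonical surjections onto the cokernel-type quotients, so that the pointwise identifications above assemble into a natural isomorphism of functors (or even an equality, given how the modules are presented here as direct sums over objects). Thus the proof is essentially a matter of citing the ideal property of $Cn$ in $C$ and then comparing the two defining formulas term by term over objects $x$ with $|x| \leqslant n$.
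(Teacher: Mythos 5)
Your proposal is correct and follows essentially the same route as the paper's proof: unwind the pointwise definition of $H_0$, observe that for $x \in \Ob(Cn)$ the indexing data $\{(y,\alpha) : y \prec x\}$ lies entirely in $Cn$ (the ideal property), and deduce part (2) by specializing to $LW$ with $RLW = W$. Your version is, if anything, slightly more explicit than the paper's about why objects $y \prec x$ stay inside $Cn$, so nothing further is needed.
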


\begin{proof}
For every object $x \in \Ob (Cn)$, one has
\begin{equation*}
(R(H_0^{C} (V)))_x = (H_0^{C} (V))_x
\end{equation*}
by the definition of the restriction functor $R$. However, from the definition of zeroth homology groups
\begin{equation*}
H_0^{C}(V) = \bigoplus _{x \in \Ob (C)} V_x / \Big{(} \sum_{\begin{matrix} y \prec x \\ \alpha: y \to x \end{matrix}} V_{\alpha} (V_y) \Big{)},
\end{equation*}
one can find the value of $H_0^{C}(V)$ on $x$ is completely determined by objects $y$ with $y \prec x$ and all morphisms from $y$ to $x$. That is, it has nothing to do with $V_z$ with $|z| > n$. Therefore, one can see that
\begin{equation*}
(H_0^{C} (V))_x = (H_0^{Cn} (RV))_x.
\end{equation*}
Consequently, for every $x \in \Ob (Cn)$, one has
\begin{equation*}
(R(H_0^{C} (V)))_x = (H_0^{Cn} (RV))_x.
\end{equation*}
This proves the first statement.

To show the second identity, we apply the first statement to $LW$ and note that $RLW = W$.
\end{proof}

Now let us turn to projective covers of $C$-modules, which have been described in literature, see for instances \cite{CEF, LY}.

Let $V$ be a finitely generated $C$-module. For every object $x$,
\begin{equation*}
(H_0^{C}(V))_x = V_x / \Big{(} \sum_{\begin{matrix} y \prec x \\ \alpha: y \to x \end{matrix}} V_{\alpha} (V_y) \Big{)}
\end{equation*}
is a finite dimensional $kC(x,x)$-module (since $V$ is a finitely generated $C$-module, $V_x$ is finite dimensional). Note that $kC(x, x)$ is a finite dimensional group algebra. Therefore, one can find a projective cover $W_x \to (H_0^{C}(V))_x$ of $kC(x, x)$-modules for each $x \in \Ob (C)$. Now let
\begin{equation*}
P = \bigoplus _{x \in \Ob (C)} kC e_x \otimes_{kC(x, x)} W_x.
\end{equation*}
Then $P$ is a projective cover of $V$ as $C$-modules. Note that
\begin{equation*}
H_0^{C} (P) = \bigoplus _{x \in \Ob (C)} W_x.
\end{equation*}

Projective covers of finitely generated $Cn$-modules can be constructed in a similar way using the homology theory of $Cn$-modules.

The following proposition gives us a bridge between minimal projective resolutions of $C$-modules and minimal projective resolutions of $Cn$-modules.

\begin{proposition}
Let $V$ be a finitely generated $C$-module. If $P \to V$ is a projective cover of $C$-modules, then $RP \to RV$ is a projective cover of $Cn$-modules as well.
\end{proposition}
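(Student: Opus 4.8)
The plan is to verify the two defining properties of a projective cover for the map $RP \to RV$: namely, that $RP$ is projective and that the map is an essential epimorphism (equivalently, that its kernel is superfluous, or that it induces an isomorphism on zeroth homology). That $RP$ is projective is immediate from the Lemma asserting that $R$ preserves projectives. That $RP \to RV$ is an epimorphism follows since $R$ is exact. So the crux is essentiality.

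For this I would argue at the level of zeroth homology, exploiting the fact that a surjection $f: M \to N$ of finitely generated modules over $kC$ (or $kCn$) is a projective cover precisely when $M$ is projective and $H_0$ of $f$ is an isomorphism — more precisely, when $M$ is projective and $\ker f \subseteq \Rad(M)$, and over these weakly directed EI-categories with finite-dimensional endomorphism algebras the radical is detected exactly by the kernel of $M \to H_0^{C}(M)$. First I would recall from the discussion preceding the proposition that, since $P \to V$ is a projective cover of $C$-modules, the induced map $H_0^{C}(P) \to H_0^{C}(V)$ is an isomorphism (this is how projective covers were constructed: $H_0^{C}(P) = \bigoplus_x W_x$ with $W_x \to (H_0^{C}(V))_x$ a projective cover of the group algebra $kC(x,x)$, which is an isomorphism on heads, and over a group algebra the head equals the module modulo its radical so the relevant comparison is an isomorphism in $H_0$). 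Then I would apply part (1) of Lemma \ref{key lemma} twice: $R(H_0^{C}(P)) = H_0^{Cn}(RP)$ and $R(H_0^{C}(V)) = H_0^{Cn}(RV)$. Since $R$ is a functor, applying it to the isomorphism $H_0^{C}(P) \xrightarrow{\sim} H_0^{C}(V)$ yields an isomorphism $H_0^{Cn}(RP) \xrightarrow{\sim} H_0^{Cn}(RV)$, and one checks this is exactly the map induced by $RP \to RV$ by naturality of $H_0$.

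Finally I would translate "isomorphism on $H_0^{Cn}$" back into "projective cover". Since $RP$ is projective over $kCn$ and the surjection $RP \to RV$ induces an isomorphism on $H_0^{Cn}$, its kernel $K$ satisfies $H_0^{Cn}(RP/K') \to H_0^{Cn}(RP)$ considerations forcing $K$ into the radical of $RP$; concretely, for each object $x$ with $|x| \le n$ the map $(RP)_x \to (RV)_x$ becomes an isomorphism after quotienting by the images of morphisms from strictly smaller objects, and since $kCn(x,x) = kC(x,x)$ is a group algebra, Nakayama's lemma at each object shows the kernel lies in $\Rad(RP)$, which is the defining condition for a projective cover. This last translation — that an epimorphism of finitely generated $kCn$-modules from a projective module which is an $H_0$-isomorphism is automatically a projective cover — is the step I expect to require the most care, since it rests on identifying the radical of a finitely generated $kCn$-module objectwise with the sum of the radical at each object and the images coming from smaller objects; I would either cite the relevant structure theory of $\EI$-category algebras (e.g.\ from \cite{W} or \cite{Xu}) or include a short self-contained argument using that each $kCn(x,x)$ is a finite-dimensional group algebra and $Cn$ has finitely many isomorphism classes of objects.
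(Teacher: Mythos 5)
Your overall strategy is the same as the paper's: reduce the projective-cover property to a statement about zeroth homology, transport that statement across $R$ using Lemma \ref{key lemma}, and translate back. However, there is a genuine error in the form of the $H_0$-characterization you use. You assert that the radical of a finitely generated module $M$ is detected exactly by the kernel of $M \to H_0^{C}(M)$, and consequently that a projective cover $P \to V$ induces an isomorphism $H_0^{C}(P) \to H_0^{C}(V)$. Both claims fail as soon as some automorphism group $C(x,x)$ has order divisible by $\Char k$, and the proposition is stated for an arbitrary field. The kernel of $M \to H_0^{C}(M)$ only records the images of morphisms from strictly smaller objects; the radical of $M$ additionally contains $\Rad(kC(x,x)) \cdot M_x$ at each object $x$. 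Concretely, take $C = FI$, $\Char k = 2$, and $V$ the simple module with $V_{[2]} = k$ the trivial $kS_2$-module and $V_y = 0$ otherwise. Then $H_0^{C}(V) = V$ is one-dimensional at $[2]$, while the projective cover of $V$ is $P = kCe_{[2]} \otimes_{kS_2} kS_2$, so that $H_0^{C}(P) \cong kS_2$ is two-dimensional at $[2]$; the induced map on $H_0^{C}$ is a proper surjection, not an isomorphism.

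The correct statement, and the one the paper uses, is that a surjection from a projective module $P$ onto $V$ is a projective cover if and only if for every object $x$ the induced map $(H_0^{C}(P))_x \to (H_0^{C}(V))_x$ is a projective cover of $kC(x,x)$-modules (it is an isomorphism only when $kC(x,x)$ is semisimple). Once you make this replacement your argument goes through essentially verbatim: Lemma \ref{key lemma} identifies $(H_0^{Cn}(RP))_x$ with $(H_0^{C}(P))_x$ and $(H_0^{Cn}(RV))_x$ with $(H_0^{C}(V))_x$ for $x \in \Ob(Cn)$, the map between them is unchanged, and being a projective cover of $kC(x,x)$-modules is a condition at the single object $x$, hence preserved. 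Note that the converse direction you invoke at the end (an epimorphism from a projective that induces an isomorphism on $H_0$ is a projective cover) is in fact true, since an $H_0$-isomorphism forces the kernel into $\ker(RP \to H_0^{Cn}(RP)) \subseteq \Rad(RP)$; it is only the forward implication, from projective cover to $H_0$-isomorphism, that breaks.
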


\begin{proof}
By the above description of projective covers, a finitely generated projective $Cn$-module $Q$ is a projective cover of $RV$ if and only if for every $x \in \Ob (Cn)$, $(H_0^{Cn} (Q))_x$ is a projective cover of $(H_0^{Cn} (RV))_x$ as $kC(x, x)$-modules. By Lemma \ref{key lemma}, this is equivalent to saying that $(H_0^{Cn} (Q))_x$ is a projective cover of $(R(H_0^{C} (V)))_x = (H_0^{C} (V))_x$.

Since $P$ is a projective cover of $V$, one knows that $(H_0^{C}(P))_x$ is a projective cover of $(H_0^{C}(V))_x$. Furthermore, since $RP$ is a projective $Cn$-module, by the conclusion in the previous paragraph, it suffices to show that
\begin{equation*}
(H_0^{Cn} (RP))_x = (H_0^{C}(P))_x.
\end{equation*}
But this follows immediately from Lemma \ref{key lemma} and the trivial identity
\begin{equation*}
(R(H_0^{C} (P)))_x = (H_0^{C}(P))_x.
\end{equation*}
\end{proof}

As a corollary, we have:

\begin{corollary} \label{restriction preserves minimal resolution}
Let $V$ be a finitely generated $C$-module. If $P^\bullet \to V \to 0$ is a minimal projective resolution of $V$, then $RP^\bullet \to RW \to 0$ is a minimal projective resolution of $RW$.
\end{corollary}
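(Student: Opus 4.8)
The plan is to deduce this directly from the preceding Proposition, which says that $R$ carries projective covers of $C$-modules to projective covers of $Cn$-modules, together with exactness of $R$ and the fact that $R$ preserves projectives. Recall that a projective resolution $P^\bullet \to V \to 0$ is minimal precisely when, setting $\Omega^0 V = V$ and $\Omega^{i+1} V = \Ker(P^i \to \Omega^i V)$ for $i \geqslant 0$, each induced surjection $P^i \to \Omega^i V$ is a projective cover. So it suffices to check this condition for the restricted complex.

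First I would apply $R$ to the given minimal resolution. Since $R$ is exact, $R P^\bullet \to R V \to 0$ is again exact, and since $R$ preserves projective modules, each $R P^i$ is a projective $Cn$-module; hence $R P^\bullet \to R V \to 0$ is a projective resolution of $RV$. Next, applying $R$ to the short exact sequences $0 \to \Omega^{i+1} V \to P^i \to \Omega^i V \to 0$ and using exactness of $R$ once more shows that the $i$-th syzygy of $RV$ relative to $R P^\bullet$ is exactly $R\, \Omega^i V$. Here I use that $C$ is locally Noetherian, so that each $\Omega^i V$ is a finitely generated $C$-module and the Proposition applies to the projective cover $P^i \to \Omega^i V$; it then yields that $R P^i \to R\, \Omega^i V$ is a projective cover of $Cn$-modules for every $i$. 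Thus every stage of $R P^\bullet \to RV \to 0$ is a projective cover, so the resolution is minimal, which is the claim (with $RV$ in place of the $RW$ in the statement).

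The only point needing any care — and the main, if modest, obstacle — is the bookkeeping that identifies the syzygies of $RV$ with the restrictions $R\,\Omega^i V$: this rests on exactness of $R$ and on the finite generation of syzygies over the locally Noetherian category $C$, which is exactly what allows the Proposition to be invoked at each step. Once that identification is in place, the conclusion is formal.
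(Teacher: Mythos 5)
Your argument is correct and is exactly the paper's intended proof: the paper's one-line justification ("minimal projective resolutions are constructed by finding projective covers step by step") is precisely the syzygy-by-syzygy bookkeeping you carry out, using exactness of $R$, preservation of projectives, and the preceding Proposition on projective covers. You have simply made explicit the details the paper leaves implicit.
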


\begin{proof}
This is clear since minimal projective resolutions are constructed by finding projective covers step by step.
\end{proof}

\section{Proof of the main theorems}

In this section we classify the global dimension of $kCn$. For this purpose, we need the following result, which is obtained by Xu in \cite[Theorem 5.3.1]{Xu}.

\begin{theorem}
Let $C$ be a finite EI-category and $k$ be a field. Then $kC$ has finite global dimension if and only if for all $x \in \Ob (C)$, the automorphism group $G_x = C(x, x)$ has order invertible in $k$. In that case, one has:
\begin{equation*}
\gl.dim kC \leqslant \max \{l(C) \mid C \text{ is a linear chain in the poset } \Ob (C) \},
\end{equation*}
where $l(C)$ is the length of $C$.
\end{theorem}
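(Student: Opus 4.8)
The plan is to reduce to the skeletal case and then prove the two implications separately, reading off the stated bound from the ``all automorphism groups invertible'' direction. Since $C$ is equivalent to a skeletal finite EI-category $C_0$, and for finite categories $C\Mod\simeq kC\Mod$ (the theorem of Section~2), equivalent finite categories have Morita equivalent category algebras, so they share the same global dimension and the same poset of isomorphism classes of objects. Hence I would assume throughout that $C$ is skeletal, so that $\Ob(C)$ is a finite poset and $C(x,y)\neq\emptyset$ forces $x\preccurlyeq y$, with $x\cong y$ only if $x=y$. I will use freely the standard facts that the simple $kC$-modules are precisely the modules $S_{x,V}$ ($x\in\Ob(C)$, $V$ a simple $kG_x$-module), each supported at the single object $x$ with value $V$; that the projective cover of $S_{x,V}$ is $kCe_x\otimes_{kG_x}P_V$ with $P_V$ a projective cover of $V$ over $kG_x$; and that, since $kC$ is finite dimensional, $\gl.dim kC=\sup_{x,V}\pd_{kC}S_{x,V}$.

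\emph{The implication ``$|G_x|$ invertible in $k$ for all $x$ $\Rightarrow$ $\gl.dim kC\leqslant L$'', together with the bound}, where $L$ is the maximal length of a chain in $\Ob(C)$. When every $kG_x$ is semisimple, $P_V=V$, so the projective cover $kCe_x\otimes_{kG_x}V\twoheadrightarrow S_{x,V}$ is an isomorphism in degree $x$, and its kernel $K$ satisfies $K_x=0$; hence $K$ is supported on $\{z:x\prec z\}$ and every composition factor of $K$ has the form $S_{z,U}$ with $x\prec z$. Let $u(x)$ be the length of the longest strictly ascending chain in $\Ob(C)$ starting at $x$, so that $\max_x u(x)=L$. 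I claim $\pd_{kC}S_{x,V}\leqslant u(x)$, by induction on $u(x)$. If $u(x)=0$ then $x$ is a maximal object, $kCe_x$ is supported only at $x$ and equals $kG_x$ there, so $S_{x,V}=kCe_x\otimes_{kG_x}V$ is projective. If $u(x)\geqslant 1$, then each composition factor $S_{z,U}$ of $K$ has $u(z)\leqslant u(x)-1$, so applying the inequality $\pd(B)\leqslant\max(\pd(A),\pd(C))$ for $0\to A\to B\to C\to 0$ along a composition series of the finite-length module $K$ gives $\pd_{kC}K\leqslant u(x)-1$ by the inductive hypothesis, whence $\pd_{kC}S_{x,V}\leqslant 1+\pd_{kC}K\leqslant u(x)$. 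Taking suprema over $x$ and $V$ yields $\gl.dim kC\leqslant L<\infty$.

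\emph{The implication ``some $|G_z|$ not invertible in $k$ $\Rightarrow$ $\gl.dim kC=\infty$''.} Fix such a $z$ and let $D=C_{\leqslant z}$ be the full subcategory of $C$ on the objects $w$ with $w\preccurlyeq z$. Being downward closed, $D$ is an ideal of $C$, and it is a finite EI-category in which $z$ is the greatest object. The constructions of Section~3 apply verbatim to the ideal $D\subseteq C$: extension by zero is an exact functor $L\colon D\module\to C\module$ with $R\circ L=\mathrm{id}$, where $R$ is restriction, which is exact and preserves projectives because $D$ is an ideal. Hence if $\gl.dim kC$ were finite, then for every finitely generated $kD$-module $X$ a finite projective resolution of $LX$ over $kC$ would restrict to a finite projective resolution of $RLX=X$ over $kD$, forcing $\gl.dim kD\leqslant\gl.dim kC<\infty$. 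So it suffices to show $\gl.dim kD=\infty$. Since $z$ is the greatest object of $D$, the projective $kDe_z$ is supported only at $z$ and equals $kG_z$ there; therefore, for any simple $kG_z$-module $W$, the simple $kD$-module $S_{z,W}$ is supported only at $z$, its minimal projective resolution over $kD$ stays supported at $z$ at every stage, and, because $kD$-homomorphisms between modules supported only at $z$ are exactly $kG_z$-homomorphisms of their values at $z$, this resolution coincides, under the natural identification of $kD$-modules supported only at $z$ with $kG_z$-modules, with the minimal projective resolution of $W$ over $kG_z$. Thus $\pd_{kD}S_{z,W}=\pd_{kG_z}W$, which is infinite for suitable $W$ since $kG_z$ is not semisimple (a non-semisimple group algebra, being self-injective, has infinite global dimension; equivalently, by Maschke's theorem $\gl.dim kG_z$ is finite iff $|G_z|$ is invertible in $k$). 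Hence $\gl.dim kD=\infty$, and so $\gl.dim kC=\infty$.

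The principal obstacle I anticipate is not conceptual but the careful handling of two points. First, one must check that the machinery of Section~3 really goes through for an arbitrary ideal $D$: the only substantive input is that there is no morphism in $C$ from an object outside $D$ to an object of $D$ (immediate from the definition of an ideal), which makes extension by zero well defined, together with the fact that restriction along an ideal preserves projectives, namely \cite[Lemma 4.2.19]{Xu}. Second, in the converse one must justify that a minimal projective resolution over $kD$ of a module supported at the greatest object remains supported there; this rests entirely on $kDe_z$ being supported only at $z$, which in turn uses skeletality, so the reduction to the skeletal case at the outset is not merely cosmetic — without it the identification of $kDe_z$ with $kG_z$, and with it the whole argument, breaks down.
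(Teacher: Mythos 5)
Your proposal is correct, but it should be noted that the paper does not actually prove this statement: it is quoted verbatim from Xu's thesis \cite[Theorem 5.3.1]{Xu} and used as a black box, so there is no internal proof to compare against. What you have written is a complete, self-contained proof along the standard stratification lines for finite EI-category algebras, and as far as I can tell every step is sound: the Morita reduction to a skeleton, the identification $\gl.dim kC=\sup_{x,V}\pd_{kC}S_{x,V}$, the upper bound by induction on the height $u(x)$ using that the kernel of $kCe_x\otimes_{kG_x}V\twoheadrightarrow S_{x,V}$ is supported strictly above $x$ when $kG_x$ is semisimple, and the converse via restriction to the ideal $C_{\preccurlyeq z}$ together with the fact that a non-semisimple group algebra, being self-injective, has infinite global dimension. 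Your converse argument is in fact a pleasant consistency check on the paper itself, since it reuses exactly the lift/restriction machinery of Section~3 (extension by zero into an ideal, and \cite[Lemma 4.2.19]{Xu} that restriction to an ideal preserves projectives), applied to the ideal $C_{\preccurlyeq z}$ rather than to the truncation $C_n$; this shows those two sections are not independent ingredients but two instances of one mechanism. The only points where you lean on unproved ``standard facts'' --- the classification of simples $S_{x,V}$ and the form $kCe_x\otimes_{kG_x}P_V$ of their projective covers --- are genuinely standard and are available in the paper's own references (\cite[Proposition 4.3]{W} and the projective-cover description recalled at the end of Section~3), so I do not regard them as gaps.
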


Although $Cn$ is not a finite $EI$-category, it has a skeletal full subcategory whose objects are $[i] = \{1, 2, \ldots, i\}$ or $F_q^{\oplus i}$, $i \leqslant n$, where we set $[0] = \emptyset$. Therefore, from this result, one gets

\begin{lemma} \label{classify global dimension}
Let $k$ be a field, $G$ be a finite group, and $d$ be a positive integer. One has:
\begin{enumerate}
\item for $C = OI$ or $C = OI_d$, $\gl.dim kCn \leqslant n$;
\item for $C = FI$ or $C = FI_d$, $\gl.dim kCn = \infty$ if $\Char k \mid n!$, and $\gl.dim kCn \leqslant n$ otherwise;
\item for $C = FI_G$, $\gl.dim kCn = \infty$ if $\Char k \mid n!|G|^n$, and $\gl.dim kCn \leqslant n$ otherwise;
\item for $C = OI_G$, $\gl.dim kCn = \infty$ if $\Char k \mid |G|$, and $\gl.dim kCn \leqslant n$ otherwise;
\item for $C = VI$, $\gl.dim kCn = \infty$ if $\Char k \mid (q^n-1)(q^n - q)\ldots (q^n-q^{n-1})$, and $\gl.dim kCn \leqslant n$ otherwise.
\end{enumerate}
\end{lemma}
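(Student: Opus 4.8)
The plan is to apply Xu's theorem recalled above after replacing $Cn$ by the equivalent finite EI-category $\mathcal{E}$, namely the skeletal full subcategory with objects $[0], [1], \ldots, [n]$ (respectively $F_q^{\oplus 0}, \ldots, F_q^{\oplus n}$ for $VI$). As noted above, $Cn$ is equivalent to $\mathcal{E}$ and hence $kCn$ is Morita equivalent to the finite-dimensional algebra $k\mathcal{E}$, so $\gl.dim kCn = \gl.dim k\mathcal{E}$ and it suffices to treat $\mathcal{E}$. The first point is that, in each of the seven cases, the poset of isomorphism classes of objects of $\mathcal{E}$ is the chain $[0] \prec [1] \prec \cdots \prec [n]$, because there is a morphism from the $i$-th object to the $j$-th object exactly when $i \leqslant j$. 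Hence the longest linear chain has length $n$, and Xu's bound gives $\gl.dim k\mathcal{E} \leqslant n$ whenever this global dimension is finite; moreover, by the same theorem, $\gl.dim k\mathcal{E} < \infty$ if and only if $\Char k$ divides none of the numbers $|\Aut([i])|$ for $0 \leqslant i \leqslant n$, where $\Aut([i])$ denotes the automorphism group of the $i$-th object.

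It then remains to identify these automorphism groups and to translate the divisibility condition, which is where essentially all the content of the lemma lies. The identifications are routine: $\Aut([i]) \cong S_i$, of order $i!$, for $C = FI$ and for $C = FI_d$ (in the latter, an injective self-map of $[i]$ is automatically bijective, so the attached $d$-coloring map has empty domain); $\Aut([i]) \cong G \wr S_i$, of order $i!\,|G|^i$, for $C = FI_G$; $\Aut([i])$ is trivial for $C = OI$ and for $C = OI_d$; $\Aut([i]) \cong G^i$, of order $|G|^i$, for $C = OI_G$; and $\Aut(F_q^{\oplus i}) \cong GL_i(F_q)$, of order $(q^i - 1)(q^i - q)\cdots(q^i - q^{i-1})$, for $C = VI$. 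For $C = OI$ and $C = OI_d$ all of these groups are trivial, so $\gl.dim kCn \leqslant n$ unconditionally, which is statement (1).

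For the remaining cases I would observe that ``$\Char k$ divides $|\Aut([i])|$ for some $i \leqslant n$'' collapses to the single divisibilities stated in the lemma, the point being that the set of prime divisors of $|\Aut([i])|$ only grows with $i$. Concretely: $\Char k \mid i!$ for some $i \leqslant n$ iff $\Char k \mid n!$; $\Char k \mid i!\,|G|^i$ for some $i \leqslant n$ iff $\Char k \mid n!$ or $\Char k \mid |G|$, equivalently iff $\Char k \mid n!\,|G|^n$; and $\Char k \mid |G|^i$ for some $1 \leqslant i \leqslant n$ iff $\Char k \mid |G|$. These give statements (2), (3), and (4) (in the degenerate case $n = 0$ one checks directly that $\gl.dim kCn = 0$). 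For $C = VI$ I would use the factorisation $|GL_i(F_q)| = q^{\binom{i}{2}} \prod_{m=1}^{i}(q^m - 1)$: the characteristic of $F_q$ divides $|GL_i(F_q)|$ exactly when $i \geqslant 2$, while the remaining prime divisors of $|GL_i(F_q)|$ are precisely those of the numbers $q^m - 1$ with $1 \leqslant m \leqslant i$, and both of these families are nondecreasing in $i$. Hence $\Char k$ divides $|GL_i(F_q)|$ for some $i \leqslant n$ if and only if $\Char k \mid |GL_n(F_q)| = (q^n - 1)(q^n - q)\cdots(q^n - q^{n-1})$, which is statement (5).

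The one step that is not bookkeeping is this last number-theoretic point for $VI$: one must keep the natural characteristic $p$ of $F_q$, which divides $|GL_i(F_q)|$ as soon as $i \geqslant 2$, separate from the primes coprime to $q$, which enter only through the factors $q^m - 1$, and then verify that neither family of prime divisors shrinks when $i$ increases. Given the above factorisation of $|GL_i(F_q)|$ this is elementary, and everything else in the proof is a direct appeal to Xu's theorem together with the standard descriptions of the endomorphism groups of the objects of $C$; the matching lower bounds that turn these estimates into the exact values of Theorem \ref{main theorem} come separately, from the restriction functor of the previous section applied to the minimal projective resolutions known in the literature.
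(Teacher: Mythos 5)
Your proof is correct and follows essentially the same route as the paper: pass to the skeletal finite EI-category, apply Xu's theorem so that the chain-length bound gives $\gl.dim \leqslant n$ and finiteness is governed by invertibility of the orders of the automorphism groups $S_i$, $G \wr S_i$, $\{1\}$, $G^i$, $GL_i(F_q)$. The only difference is that you explicitly justify why the conditions ``$\Char k$ divides $|\Aut([i])|$ for some $i \leqslant n$'' collapse to the single divisibility statements (notably the factorisation argument for $GL_i(F_q)$, and the $n=0$ degeneracy), details the paper asserts without comment.
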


\begin{proof}
We note that every linear chain in the poset $\Ob (Cn)$ has length at most $n$. Therefore, by the previous theorem, we only need to determine the situation that for every object $x \in \Ob (C)$, the automorphism group $C(x, x)$ has order invertible in $k$.
\begin{itemize}
\item For $OI$ or $OI_d$, $C(x, x)$ is a trivial group.
\item For $FI$, $C(x, x)$ has order $|x|!$, which is invertible in $k$ for every $x \in \Ob (Cn)$ if and only if $\Char k \nmid n!$.
\item For $FI_G$, $C(x, x)$ has order $|x|!G^{|x|}$, which is invertible in $k$ for every $x \in \Ob (Cn)$ if and only if $\Char k \nmid n!|G|^n$.
\item For $OI_G$, $C(x, x)$ has order $|G|^n$, which is invertible in $k$ if and only if $\Char k \nmid |G|$.
\item For $VI$, $C(x, x)$ has order $(q^{|x|} - 1) \ldots (q^{|x|} - q^{|x|-1})$, which is invertible in $k$ for every $x \in \Ob (Cn)$ if and only if $\Char k \nmid (q^n-1) \ldots (q^n - q^{n-1})$.
\end{itemize}
The conclusion then follows.
\end{proof}

To prove Theorem 1.1, we need the following result, which is \cite[Corollary 5.1]{L}.

\begin{lemma}
Let $V$ be a finitely generated $Cn$-module, and let $P^{\bullet} \to LV \to 0$ be a minimal projective resolution. Then for every $s \in N$, one has
\begin{equation*}
\gd(P^s) \leqslant \max \{ |x| \mid V_x \neq 0 \} + s.
\end{equation*}
\end{lemma}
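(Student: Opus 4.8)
\emph{Reduction to homological degrees.} Since $P^{\bullet} \to LV \to 0$ is a \emph{minimal} projective resolution, the induced differentials on the complex $H_0^{C}(P^{\bullet})$ vanish, so the $s$-th homology functor evaluated at $LV$ is $H_0^{C}(P^s)$; consequently $\gd(P^s) = \hd_s(LV)$. Write $d = \max\{|x| \mid V_x \neq 0\}$ (with $d = -1$ when $V = 0$), so $d \leqslant n$ and $LV$ has support with the same top degree $d$. Since $H_0^{C}(LV)$ is a pointwise quotient of $\bigoplus_x (LV)_x$, it too is supported in degrees $\leqslant d$, and as $P^0 \to LV$ is a projective cover we get $\gd(P^0) = \gd(LV) \leqslant d$. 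It thus remains to prove $\hd_s(LV) \leqslant d + s$ for all $s \geqslant 1$.

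\emph{The tool.} The engine is the genetic shift functor $\Sigma$: it is exact, preserves finitely generated modules, and preserves projectives, and by Lemma~\ref{genetic shift functor} one has $\Sigma(kCe_x) \cong P \oplus Q$ with $\gd P = |x|$ and $\gd Q \leqslant |x| - 1$, so $\gd(\Sigma P) \leqslant \gd(P)$ for every finitely generated projective $P$. A direct check on values, together with the observation that submodules, quotients, and $\Sigma$-images of lifted modules are again lifted, shows $\Sigma(LV) \cong LV'$ for a finitely generated $Cn$-module $V'$ whose support has top degree $\leqslant d - 1$.

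\emph{The induction.} I would induct on $d$. The cases $d = -1$ (where $LV = 0$) and $d = 0$ (where $LV$ is concentrated in degree $0$ and its minimal resolution, computed directly from $kCe_{\emptyset}$, has $\gd(P^s) = s$) are immediate. For the inductive step, consider the two-step filtration $0 \to U \to LV \to LV'' \to 0$ in which $U$ is the largest submodule of $LV$ supported in the top degree $d$ — equivalently a representation of the degree-$d$ automorphism groupoid of $C$ placed in degree $d$ — and $LV''$ is again lifted, with support of top degree $\leqslant d - 1$. The long exact sequence of homology gives $\hd_s(LV) \leqslant \max\{\hd_s(U),\, \hd_s(LV'')\}$, and $\hd_s(LV'') \leqslant (d-1) + s$ by the inductive hypothesis; so the whole problem reduces to proving $\hd_s(U) \leqslant d + s$ for $U$ concentrated in a single degree $d$. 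For this one builds a minimal projective resolution of $U$ by hand: $P^0$ is induced up from a projective cover of $U$ over the degree-$d$ automorphism group algebra, so $\gd(P^0) = d$; its first syzygy consists of the ``tail'' of this induced module in degrees $> d$ (together, in modular characteristic, with a piece still in degree $d$), and using Lemma~\ref{genetic shift functor} to identify its zeroth homology one finds $P^1$ generated in degree $d+1$; one then continues, showing inductively that the $s$-th syzygy is generated in degree $\leqslant d + s$.

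\emph{The main obstacle.} The genuine difficulty is exactly this last, single-level (torsion) case. The naive attempt to pass it through $\Sigma$ fails: $\Sigma U$ is concentrated in degree $d-1$, but the natural map $U \to \Sigma U$ coming from $\mathrm{Id}_{C} \Rightarrow \iota$ is identically zero on a module concentrated in its top degree, and $\Sigma^{d+1} U = 0$ while the minimal resolution of $U$ is infinite (whenever the relevant automorphism group has order divisible by $\Char k$, and in any case over the unbounded categories $C$ themselves), so no finite number of shifts can detect the resolution. One is therefore forced into a direct, Koszul-type bookkeeping of the syzygies of induced modules, using the decomposition $\Sigma(kCe_x) \cong P \oplus Q$ to control how generating degrees grow one homological step at a time, and carrying this out uniformly across the seven families (whose degree-$d$ automorphism groups range over symmetric groups, wreath products, and general linear groups) is where the real work lies. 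Everything in the first three paragraphs, by contrast, is formal.
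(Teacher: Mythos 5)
Your first paragraph is correct and matches the first half of the paper's argument: minimality of $P^{\bullet}$ forces the differentials of $H_0^{C}(P^{\bullet})$ to vanish, so $\gd(P^s)$ is controlled by $\hd_s(LV)$ (the paper gets this from \cite[Corollary 2.10]{LY}), and the problem becomes the inequality $\hd_s(LV) \leqslant d + s$ with $d = \max\{|x| \mid V_x \neq 0\}$. Your filtration $0 \to U \to LV \to LV'' \to 0$ by the top-degree part and the resulting long exact sequence correctly reduce that inequality, by induction on $d$, to the case of a module concentrated in a single degree. All of this is sound, but, as you say yourself, it is the formal part.

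The genuine gap is that the single-degree case --- $\hd_s(U) \leqslant d + s$ for $U$ supported on objects of one fixed size $d$ --- is never proved. This is not a loose end: it is the entire mathematical content of the lemma, and it is exactly the statement the paper imports wholesale as \cite[Corollary 5.1]{L} (homological degrees of torsion modules over $C$) rather than proving. Your own base case $d = 0$ already exhibits the problem: you call it ``immediate,'' but computing the minimal resolution of the module $k$ concentrated at $\emptyset$ over $kFI$ in characteristic $p$ (where the resolution is infinite) is precisely the difficulty you later concede is ``where the real work lies.'' The sketch you give for it --- identify the first syzygy of the induced module $kCe_x \otimes_{kC(x,x)} W_x$ as a ``tail'' in degrees $> d$ plus a degree-$d$ piece, then iterate --- is plausible in outline but does not explain why the syzygies' generating degrees grow by at most one per homological step; Lemma \ref{genetic shift functor} controls $\Sigma$ of projectives, not the syzygies of a torsion module, and as you correctly observe the shift functor kills $U$ after finitely many applications while the resolution is infinite, so some genuinely new argument (the one carried out in \cite{L}) is required. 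As written, the proposal re-derives the reduction but replaces the paper's citation of the hard input with an acknowledged placeholder, so it does not constitute a proof.
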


\begin{proof}
By \cite[Corollary 5.1]{L}, one has
\begin{equation*}
\hd_s(LV) \leqslant \max \{ |x| \mid V_x \neq 0 \} + s
\end{equation*}
for $s \in N$. By \cite[Corollary 2.10]{LY}, \footnote{In this corollary the authors only proved the conclusion for $C = FI$. However, a careful observation convinces the reader that their argument actually works for all categories $C$ studied in this paper since it does not rely on any specific combinatorial properties uniquely possessed by $FI$.} we also have
\begin{equation*}
\gd(P^s) \leqslant \max \{\hd_i(V) \mid 0 \leqslant i \leqslant s \}.
\end{equation*}
The conclusion follows.
\end{proof}

Now we consider a special $Cn$-module $M$ defined as follows:
\begin{equation*}
M_x = \begin{cases}
k, & |x| = 0;\\
0, & 0 < |x| \leqslant n.
\end{cases}
\end{equation*}
For this module, we have:

\begin{lemma} \label{special simple module}
Let $k$ be field such that $|C(x,x)|$ is invertible in $k$ for every $x \in Cn$. Then the projective dimension $\pd_{kCn} (M) \geqslant n$.
\end{lemma}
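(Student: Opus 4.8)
The plan is to establish $\pd_{kCn}(M) \geq n$ by contradiction, through a dimension-counting (Euler characteristic) computation carried out over $\mathbb{Z}$. Suppose $\pd_{kCn}(M) = d$ with $d < n$, and fix a minimal projective resolution
\begin{equation*}
0 \to P^d \to \cdots \to P^1 \to P^0 \to M \to 0
\end{equation*}
of $M$ as a $kCn$-module; throughout we work in the skeleton of $Cn$ with objects $[0], [1], \ldots, [n]$. The first step is to observe that each $P^i$ is generated in degrees $\leq i$. Indeed $M = R(LM)$, where $LM$ is the $C$-module with $(LM)_{[0]} = k$ and $(LM)_x = 0$ for $x \not\cong [0]$; by Corollary \ref{restriction preserves minimal resolution} our resolution is (up to isomorphism) the restriction of a minimal projective resolution $Q^\bullet \to LM \to 0$ over $kC$; by \cite[Corollary 5.1]{L}, applied with $V = M$ so that $\max\{|x| \mid M_x \neq 0\} = 0$, one has $\gd(Q^i) \leq i$; and by Lemma \ref{key lemma} restriction does not enlarge generating degrees among objects of size $\leq n$, so $\gd(P^i) \leq i$. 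Consequently each $P^i$ is a finite direct sum of modules $kCn\, e_{[j]} \otimes_{kC([j],[j])} W$ with $j \leq i$.

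The second step is the dimension count. For $j \leq m \leq n$ the right $kC([j],[j])$-module $kCn([j],[m])$ is free of rank $s(j,m) := |C([j],[m])| / |C([j],[j])|$, the number of subobjects of $[m]$ isomorphic to $[j]$; this is a positive integer depending only on the two sizes, and we set $s(j,m) = 0$ for $j > m$. Writing $P^i = \bigoplus_{j \leq i} kCn\, e_{[j]} \otimes_{kC([j],[j])} W^{(i)}_j$, we obtain $\dim_k (P^i)_{[m]} = \sum_{j=0}^{i} s(j,m)\, w_{i,j}$, where $w_{i,j} := \dim_k W^{(i)}_j \in \mathbb{Z}_{\geq 0}$. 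Evaluating the resolution at $[m]$ produces an exact sequence of finite-dimensional vector spaces, so $\sum_i (-1)^i \dim_k(P^i)_{[m]} = \dim_k M_{[m]} = \delta_{m,0}$; expanding and interchanging the sums gives
\begin{equation*}
\sum_{j=0}^{n} s(j,m)\, b_j = \delta_{m,0} \quad \text{for } 0 \leq m \leq n, \qquad \text{with } b_j := \sum_i (-1)^i w_{i,j} \in \mathbb{Z}.
\end{equation*}
Since $w_{i,j}$ can be nonzero only when $j \leq i \leq d < n$, we have $b_n = 0$. On the other hand $S := \big(s(j,m)\big)_{0 \leq j, m \leq n}$ is upper triangular with all diagonal entries $s(m,m) = 1$, hence invertible over $\mathbb{Z}$, so the displayed system has the unique solution $b_j = (S^{-1})_{0,j}$; in particular $b_n = (S^{-1})_{0,n}$.

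The last step, which is the one genuinely combinatorial input, is to verify $(S^{-1})_{0,n} \neq 0$ for each of the seven categories. Computing morphism counts one finds $s(j,m) = \binom{m}{j}$ for $FI$, $OI$, $FI_G$ and $OI_G$; $s(j,m) = \binom{m}{j}\,d^{m-j}$ for $FI_d$ and $OI_d$; and $s(j,m) = \binom{m}{j}_q$, the Gaussian binomial coefficient, for $VI$. In each case $S$ satisfies a Pascal-type recursion $s(j,m) = s(j-1,m-1) + \lambda_j\, s(j,m-1)$ with $\lambda_j$ a positive integer or a power of $q$ (a reflection of the genetic shift functor of Lemma \ref{genetic shift functor}), and the corresponding inversion formulas give $(S^{-1})_{0,n} = (-1)^n$, $(-d)^n$, and $(-1)^n q^{\binom{n}{2}}$ respectively, each a nonzero integer. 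This contradicts $b_n = 0$, so $\pd_{kCn}(M) \geq n$. The main obstacle is precisely this last point: pinning down the combinatorial form of $s(j,m)$ in each family, and the fact that the corresponding incidence matrix has nonvanishing $(0,n)$-entry in its inverse — equivalently, that the Möbius function from bottom to top is nonzero for the Boolean lattice, its $d$-colored analogue, and the lattice of subspaces of $\mathbb{F}_q^n$.
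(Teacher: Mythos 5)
Your argument is correct, but it departs from the paper's proof at the decisive step. Both proofs start identically: identify the minimal projective resolution of $M$ over $kCn$ with the restriction of a minimal projective resolution of $LM$ over $kC$ (Corollary \ref{restriction preserves minimal resolution}), and invoke \cite[Corollary 5.1]{L} to get $\gd(P^i)\leqslant i$. The paper then sharpens this to the statement that each $P^i$ is generated purely in degree $i$ (Step 1), and rules out a resolution of length $<n$ with the genetic shift functor $\Sigma$ of Lemma \ref{genetic shift functor}: applying $\Sigma$ to a truncated resolution produces a split exact sequence of projectives whose generating degrees are incompatible (Step 2). You instead rule out early termination arithmetically: since every morphism in $C$ is a monomorphism, precomposition by automorphisms acts freely on hom-sets, so $(kCn\,e_{[j]})_{[m]}$ is free of rank $s(j,m)$ over $kC([j],[j])$, and evaluating the resolution at each object gives a unitriangular integer system whose unique solution has $b_n=(S^{-1})_{0,n}$ equal to a nonzero M\"obius-type value ($(-1)^n$, $(-d)^n$ or $(-1)^nq^{\binom{n}{2}}$), contradicting the vanishing $b_n=0$ forced by a resolution of length $d<n$ with $\gd(P^i)\leqslant i$. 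Your route is more elementary --- it needs only the weaker degree bound and no shift functor --- at the cost of a case-by-case combinatorial verification of the nonvanishing of $(S^{-1})_{0,n}$; the paper's route is uniform across the seven families and yields the stronger structural conclusion that the resolution is linear, which is what drives the Koszulity corollary that follows. Your closing observation that the Pascal-type recursions for $s(j,m)$ mirror the decomposition in Lemma \ref{genetic shift functor} correctly identifies why the two arguments succeed for the same underlying reason.
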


\begin{proof}
Let $P^{\bullet} \to LM \to 0$ be a minimal projective resolution of the $C$-module $LM$.

\textbf{Step 1}: We claim that for every $s \leqslant n$, $P^s$ is either 0 or is generated by its values on objects $x$ with $|x| = s$. We prove this statement by induction. For $s = 0$, the conclusion holds trivially. Suppose that it is true for $s = i$, and let us consider $P^{i+1}$. Without loss of generality we may assume that $P^{i+1} \neq 0$.

We have a short exact sequence $0 \to Z^{i+1} \to P^i \to Z^i \to 0$. Since $P^{\bullet} \to LV \to 0$ is a minimal projective resolution and $P^{i+1} \neq 0$, $P^i$ is nonzero as well, and hence must be generated by its values on objects $x$ with $|x| = i$. This is the same for $Z^i$. Therefore,
\begin{equation*}
H_0^{C} (Z^i) = \bigoplus _{|x| = i} (H_0^{C} (Z^i))_x
\end{equation*}
since the values of $H_0^{C} (Z^i)$ on other objects are all zero.

Note that under the given assumption, $kC(x, x)$ is a semisimple algebra for every $x$ with $|x| = i$. Therefore, by the construction of projective covers described in the previous section,
\begin{equation*}
P^i \cong \bigoplus _{|x| = i} kC e_x \otimes _{kC(x, x)} (H_0^{C} (Z^i))_x.
\end{equation*}
Consequently, $Z^{i+1}_x = 0$ for all objects $x$ with $|x| = i$. It is also clear that $Z^{i+1}_x = 0$ for all objects $x$ with $|x| < i$ since it is a submodule of $P^i$. From the surjection $P^{i+1} \to Z^{i+1} \to 0$ one deduces that $P^{i+1}$ is generated by its values on some objects $x$ with $|x| \geqslant i+1$. But from the previous lemma one also has $\gd(P^{i+1}) \leqslant i + 1$. Consequently, $P^{i+1}$ is generated by its values on objects $x$ with $|x| = i+1$. The claimed statement follows by induction.

\textbf{Step 2}: We claim that $P^s$ is nonzero for every $s \leqslant n$, and use contradiction to prove it. Let $s$ be the minimal number in $[n]$ such that $P^s = 0$. Then for every $i < s$, $P^i \neq 0$, and by the conclusion of Step 1, $P^i$ is generated by its values on objects $x$ with $|x| = i$. Furthermore, for $i \geqslant s$, $P^i = 0$ since $P^{\bullet} \to LV \to 0$ is a minimal projective resolution. Therefore, one gets the following resolution
\begin{equation*}
0 \to P^{s-1} \to \ldots \to P^1 \to P^0 \to LM \to 0.
\end{equation*}
Applying the genetic shift functor $\Sigma$ one obtains an exact sequence
\begin{equation*}
0 \to \Sigma P^{s-1} \to \ldots \to \Sigma P^1 \to \Sigma P^0 \to \Sigma LM = 0 \to 0
\end{equation*}
which must split since every term is projective. But this is impossible. Indeed, by Lemma \ref{genetic shift functor}, $\Sigma P^{s-1}$ has a direct summand whose generating degree is $s-1$, while the generating degrees of all other terms in this sequence is strictly less than $s-1$.

\textbf{Step 3}: Now applying the restriction functor $R$ to $P^{\bullet} \to LM \to 0$ we obtain a minimal projective resolution $RP^{\bullet} \to RLM = M \to 0$ by Corollary \ref{restriction preserves minimal resolution}. For every $s \leqslant n$, $RP^s \neq 0$ since by the conclusions of Steps 1 and 2, $P^s$ is nonzero and is generated by its values on objects $x$ with $|x| = s$. Therefore, $\pd_{kCn} (M) \geqslant n$.
\end{proof}

\begin{remark}
When $k$ is a field of characteristic 0, it has been shown in \cite{GL} that $kC$ is a Koszul algebra; that is, every simple $kC$-module has a linear projective resolution. This trivially implies the conclusion of Step 1 in the proof. However, for fields with a positive characteristic, $kC$ is not Koszul, and hence we have to find a new method to show this result.
\end{remark}

Let $x$ be an object in $Cn$, and let $S_x$ be an arbitrary simple $kCn$-module supported on $x$. That is, $S_x$ restricted to an object $y$ isomorphic to $x$ is a simple $kCn(y, y)$-module, and restricted to objects not isomorphic to $x$ is 0. Applying the same argument in Lemma \ref{special simple module}, one has:

\begin{corollary}
Let $k$ be a field such that $|Cn(y, y)|$ is invertible in $k$ for every $y \in \Ob (Cn)$. Then for $x \in \Ob (Cn)$ and a simple $kC_n$-module $S_x$ supported on $x$, there is a linear minimal projective resolution
\begin{equation*}
0 \to P^{n-|x|} \to P^{n - |x|-1} \to \ldots \to P^0 \to S_x \to 0
\end{equation*}
such that $P^i$ is generated by its values on objects $y$ satisfying $|y| = i + |x|$ for $0 \leqslant i \leqslant n-|x|$. In particular, $kC_n$ is a Koszul algebra.
\end{corollary}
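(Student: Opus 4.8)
The plan is to run the proof of Lemma~\ref{special simple module} almost verbatim, with the empty set replaced by $x$ and with the shift $m := |x|$ carried along. The simple $kC_n$-module $S_x$ is supported only on objects isomorphic to $x$, where its value is a simple (finite dimensional) $kC(x,x)$-module, say of dimension $d \geqslant 1$. Lift it to the $C$-module $LS_x$, which is supported only on objects of size $m$, and fix a minimal projective resolution $P^\bullet \to LS_x \to 0$ of $kC$-modules. I will show that $P^s$ is nonzero and generated in degree $s+m$ for $0 \leqslant s \leqslant n-m$, that $RP^s = 0$ for $s > n-m$, and then apply the restriction functor.

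\emph{Step 1 (linearity).} By induction on $s$ one shows that, for $0 \leqslant s \leqslant n-m$, the module $P^s$ is either $0$ or generated by its values on the objects of size $s+m$. The base case holds because $H_0^C(LS_x)$ is concentrated in degree $m$. For the inductive step one uses the short exact sequence $0 \to Z^{s+1} \to P^s \to Z^s \to 0$ exactly as in Lemma~\ref{special simple module}: $P^s$ is generated in degree $s+m$ and $kC(y,y)$ is semisimple for every $y$ with $|y| = s+m \leqslant n$, so the projective cover $P^s$ of $Z^s$ splits off along the objects of size $s+m$, forcing $Z^{s+1}_y = 0$ whenever $|y| \leqslant s+m$; hence $P^{s+1}$ is generated in degrees $\geqslant s+m+1$, while the bound $\gd(P^{s+1}) \leqslant (s+1)+m$ (which holds since $S_x$ is supported in degree $m$) forces generation in degree exactly $(s+1)+m$. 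Running this computation once more at the top --- legitimate because it only needs semisimplicity of $kC(y,y)$ for $|y| = n$ --- shows that $Z^{n-m+1}$, and hence $P^i$ for every $i \geqslant n-m+1$, vanishes on all objects of size $\leqslant n$; thus $RP^i = 0$ for $i \geqslant n-m+1$.

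\emph{Step 2 (nonvanishing).} Following Step~2 of Lemma~\ref{special simple module}, suppose $P^s = 0$ for some $s$ with $0 \leqslant s \leqslant n-m$ and take $s$ minimal. Then $P^i = 0$ for $i \geqslant s$, while for $i < s$ the module $P^i$ is nonzero and generated in degree $i+m$, so we obtain a bounded exact complex $0 \to P^{s-1} \to \cdots \to P^0 \to LS_x \to 0$. Since $S_x$ lives in degree $m$, iterating the shift functor gives $\Sigma^{m+1}(LS_x) = 0$, so applying the exact functor $\Sigma^{m+1}$ produces a bounded exact --- hence split --- complex $0 \to \Sigma^{m+1}P^{s-1} \to \cdots \to \Sigma^{m+1}P^0 \to 0$ of finitely generated projectives. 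By Lemma~\ref{genetic shift functor} the functor $\Sigma$ preserves generating degrees and sends nonzero finitely generated projectives to nonzero ones, so $\Sigma^{m+1}P^{s-1}$ is nonzero of generating degree $(s-1)+m$, strictly larger than the generating degree of every other term; this is incompatible with splitness (the top term of a split bounded exact complex of projectives is a direct summand of the sum of the lower terms), a contradiction. One could equally reduce to the case already treated in Lemma~\ref{special simple module} by applying only $\Sigma^m$, since $\Sigma^m(LS_x) \cong (LM)^{\oplus d}$.

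\emph{Step 3 (conclusion).} Applying $R$ to $P^\bullet \to LS_x \to 0$, Corollary~\ref{restriction preserves minimal resolution} and the identity $RLS_x = S_x$ yield a minimal projective resolution of the $kC_n$-module $S_x$. By Steps~1 and~2 together with Lemma~\ref{key lemma}, for $0 \leqslant i \leqslant n-m$ the term $RP^i$ is nonzero and generated in degree $i+m$, and $RP^i = 0$ for $i \geqslant n-m+1$, so this resolution is exactly $0 \to RP^{n-m} \to \cdots \to RP^0 \to S_x \to 0$ with $RP^i$ generated in degree $i+|x|$, as claimed; since every simple $kC_n$-module is supported on a single object and hence isomorphic to some $S_x$, it follows that $kC_n$ is Koszul. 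The one place that needs genuine care is Step~2: the module in Lemma~\ref{special simple module} lives in degree $0$, so a single application of $\Sigma$ annihilates its lift, whereas here one must apply $\Sigma^{m+1}$ and exploit the fact that $\Sigma$ preserves generating degrees, so that the bookkeeping inside the split complex still produces the contradiction; everything else is a routine translation of the earlier arguments.
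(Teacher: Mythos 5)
Your proposal is correct and is essentially the paper's own argument: the paper's proof simply says to modify the proof of Lemma \ref{special simple module} using the grading by $|y|-|x|$, and your write-up carries out exactly that modification (shifting all degrees by $m=|x|$, using $\gd(P^s)\leqslant m+s$, and applying $\Sigma^{m+1}$ to kill $LS_x$ in the nonvanishing step). The only point worth noting is that, like the paper, you implicitly use that the top summand $P$ in Lemma \ref{genetic shift functor} is nonzero so that $\Sigma$ preserves the top generating degree of a projective.
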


\begin{proof}
We note that the category $Cn$ is a graded category by letting the degree of a morphism $\alpha: x \to y$ be $|y| - |x|$. By slightly modifying the proof of the previous lemma, one can show the existence of such a linear minimal projective resolution for $S_x$. Furthermore, by \cite[Proposition 4.3]{W}, every simple $kCn$-module is isomorphic to $S_x$ supported on a certain object $x \in \Ob (Cn)$. The conclusion follows.
\end{proof}

\begin{remark}
When $k$ is a field of characteristic 0, the Koszulity of $kCn$ follows from \cite[Theorem 1.1]{GL} and \cite[Corollary 5.12]{GL}. But these results do not apply to the positive characteristic case.
\end{remark}

Now we are ready to prove Theorem \ref{main theorem}.

\begin{proof}
In the situation that there is an object $x$ in $Cn$ such that the order of $C(x, x)$ is not invertible in $k$, by Lemma \ref{classify global dimension}, $\gl.dim kCn = \infty$. Otherwise, $\gl.dim kCn \leqslant n$ by the same lemma, and $\gl.dim kCn \geqslant n$ by Lemma \ref{special simple module}. The conclusion follows.
\end{proof}

\end{document}